\newcommand{\R}{\mathbb{R}}
\newcommand{\ie}{{\em i.$\,$e.$\!$} }
\newcommand{\e}{\varepsilon}
\newtheorem{theorem}{Theorem}[section]
\newtheorem{lemma}[theorem]{Lemma}
\theoremstyle{definition}
\theoremstyle{remark}
\newtheorem{remark}{Remark}[section]%[subsection]
\DeclareMathOperator{\supp}{supp}
\DeclareMathOperator{\dist}{dist}
\numberwithin{equation}{section}
\begin{document}

\title[Weak-type (1,1)]{Weak-type $(1,1)$ estimates for strongly singular 
operators}

\author{Magali Folch-Gabayet}
\address{Instituto de Matem\'aticas\\
Universidad Nacional Aut\'onoma de M\'exico\\
Circuito Exterior, Cd. Universitaria\\Mexico City, Mexico 04510}
\email{folchgab@matem.unam.mx}

\author{Ricardo A. S\'aenz}
\address{Facultad de Ciencias\\
Universidad de Colima\\
Ave. Bernal D\'iaz del Castillo \# 340, Col. Villa San Sebasti\'an\\
Colima, Colima, Mexico 28045}
\email{rasaenz@ucol.mx}
\date{\today}
\subjclass[2010]{42B20; 44A25}
\keywords{singular integrals, oscillatory integrals, strongly singular
operators, weak-type estimates}

\begin{abstract}
Let $\psi$ be a positive function defined near the origin such that
$\lim_{t\to 0^{+}}\psi(t)=0$. We consider the operator
\begin{equation*}
T_\theta f(x) = \lim_{\e\to 0^+} \int_\e^1 e^{i\gamma(t)}f(x-t)
\frac{dt}{t^{\theta}\psi(t)^{1-\theta}},
\end{equation*}
where $\gamma$ is a real function with $\lim_{t\to 0^+}|\gamma(t)| = \infty$ 
and $0 \le \theta \le 1$. Assuming certain regularity and growth conditions on
$\psi$ and $\gamma$, we show that $T_1$ is of weak type $(1,1)$.
\end{abstract}

\maketitle

%%%%%%%%%%%%%%%%%%%%%%%%%%%%%%%%%%%%%%%%%%%%%%%%%%%%%%%%%%%%%%%%%%%
%%%%%%%%%%%%%%%%%%%%%%%%%%%%%%%%%%%%%%%%%%%%%%%%%%%%%%%%%%%%%%%%%%%
\section{Introduction and preliminaires}
%%%%%%%%%%%%%%%%%%%%%%%%%%%%%%%%%%%%%%%%%%%%%%%%%%%%%%%%%%%%%%%%%%%
%%%%%%%%%%%%%%%%%%%%%%%%%%%%%%%%%%%%%%%%%%%%%%%%%%%%%%%%%%%%%%%%%%%

Define, for functions $f\in C_{0}^{\infty}(\R)$, the operator
\begin{equation*}
T_{\alpha,\beta}f(x) = \lim_{\e\to 0^{+}}
\int_\e^1 e^{it^{-\alpha}} f(x - t) \frac{dt}{t^\beta},
\end{equation*}
where $\alpha > 0$ and $\beta \ge 1$. The following theorem was proved by
Hirschman in \cite{Hirschman}, and by Fefferman and Stein in \cite{FS72}.

\begin{theorem}\label{Hirs-thm}
Let $\alpha>0$ and $\beta \ge 1$. Whenever $\alpha + 2\geq 2\beta$, the
following hold.
\begin{enumerate}
\item $T_{\alpha, \beta}$ extends to a bounded operator on $L^{2}(\R)$.
\item If $|\frac{1}{2}-\frac{1}{p}|\leq \frac{1}{2}-\frac{\beta-1}{\alpha}$
then $T_{\alpha, \beta}$ extends to a bounded operator on $L^{p}(R)$ for
$1 < p <\infty$.
\item If $|\frac{1}{2}-\frac{1}{p}|>\frac{1}{2}-\frac{\beta-1}{\alpha}$ then
$T_{\alpha, \beta}$ is not a bounded operator on $L^{p}(\R)$.
\end{enumerate}
\end{theorem}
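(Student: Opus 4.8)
My plan is to exploit that $T_{\alpha,\beta}$ is convolution with the kernel $K(t)=e^{it^{-\alpha}}t^{-\beta}\mathbf 1_{(0,1)}(t)$ and to run a dyadic decomposition in $t$. First I would fix a smooth partition $1=\sum_{j\ge 0}\chi_j$ of $(0,1]$ with $\supp\chi_j\subset\{t\sim 2^{-j}\}$, set $K_j=\chi_j K$ (so $|K_j|\lesssim 2^{j\beta}$, $|K_j'|\lesssim 2^{j(\beta+\alpha+1)}$, $|\supp K_j|\sim 2^{-j}$) and write $T=\sum_{j\ge 0}T_j$; the piece $T_0$ is convolution with a bounded compactly supported function and is bounded on every $L^p$, so only $j\ge 1$ matters. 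For (1) I would use Plancherel: writing $\widehat{K_j}(\xi)=\int e^{i\phi(t)}t^{-\beta}\chi_j(t)\,dt$ with $\phi(t)=t^{-\alpha}-\xi t$, one has $|\phi''|\sim 2^{j(\alpha+2)}$ on $\supp\chi_j$, so van der Corput's second-derivative estimate gives
\[
|\widehat{K_j}(\xi)|\;\lesssim\;2^{j\beta}\,2^{-j(\alpha+2)/2}\;=\;2^{-j(\alpha+2-2\beta)/2}
\]
uniformly in $\xi$. When $\alpha+2>2\beta$ this is summable and $\|\widehat K\|_\infty<\infty$; in the borderline case $\alpha+2=2\beta$ I would additionally note that $|\phi'|\gtrsim\max(|\xi|,2^{j(\alpha+1)})$ unless $|\xi|$ is comparable to the critical frequency $2^{j(\alpha+1)}$, so integration by parts makes $\widehat{K_j}(\xi)$ negligible off that range, and for each fixed $\xi$ only boundedly many $j$ contribute size $O(1)$. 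Convergence of the limit defining $T_{\alpha,\beta}f$ on $C_0^\infty$ is routine (substitute $s=t^{-\alpha}$ and Taylor-expand $f$ about $x$).

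Part (2) is the substantial part. From (1) we have $\|T_j\|_{L^2\to L^2}\lesssim 2^{-j\nu}$, $\nu=\tfrac12(\alpha+2-2\beta)\ge 0$, while trivially $\|T_j\|_{L^p\to L^p}\le\|K_j\|_{L^1}\lesssim 2^{j(\beta-1)}$. Riesz--Thorin interpolation of these two bounds yields $\|T_j\|_{L^p\to L^p}\lesssim 2^{j[(\beta-1)(1-\theta)-\nu\theta]}$ with $\theta=2/p'$, an exponent that is strictly negative exactly when $\tfrac1p<1-\tfrac{\beta-1}{\alpha}$; summing the geometric series and using duality (the adjoint kernel $\overline{K(-\cdot)}$ is of the same type) gives $L^p$ boundedness on the \emph{open} range $|\tfrac1p-\tfrac12|<\tfrac12-\tfrac{\beta-1}{\alpha}$. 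The endpoint $\tfrac1p=1-\tfrac{\beta-1}{\alpha}$ is the real obstacle: there the $T_j$ bounds are merely $O(1)$ and plain summation fails, so cancellation is essential, and this is where the Fefferman--Stein mechanism enters. Although $K_j$ sits at physical scale $2^{-j}$, the bound $|K_j'|\lesssim 2^{j(\beta+\alpha+1)}$ forces it to satisfy a H\"ormander condition already at a finer scale $2^{-\mu j}$ with $\mu=\mu(\alpha,\beta)>1$, namely $\int_{|x|>2^{1-\mu j}}|K_j(x-y)-K_j(x)|\,dx\lesssim 1$ for $|y|<2^{-\mu j}$. I would then run a Calder\'on--Zygmund decomposition of $f$ at height $\lambda$, control the good part by $L^2$, and for each cube $Q$ of the bad part split $\sum_j T_j$ at the threshold where $2^{-j}$ crosses a fixed power of $\ell(Q)$: the coarse pieces are absorbed by the $L^2$ gain $2^{-j\nu}$ through Chebyshev, the fine pieces by the H\"ormander cancellation, and the power is chosen so that the two contributions balance precisely along the endpoint line. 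I expect this two-scale bookkeeping --- matching the exponents so that the argument closes at $\tfrac12-\tfrac{\beta-1}{\alpha}$ rather than at a strictly smaller exponent --- to be the main difficulty.

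For (3) I would first sharpen (1) by stationary phase to the asymptotics $\widehat K(\xi)\sim c\,|\xi|^{-\sigma}e^{i\kappa|\xi|^{\rho}}$ as $|\xi|\to\infty$, with $\sigma=\tfrac{\alpha+2-2\beta}{2(\alpha+1)}$, $\rho=\tfrac{\alpha}{\alpha+1}\in(0,1)$ and $\sigma/\rho=\tfrac12-\tfrac{\beta-1}{\alpha}$, so that modulo a bounded operator $T_{\alpha,\beta}$ is the multiplier operator with symbol $|\xi|^{-\sigma}e^{i\kappa|\xi|^{\rho}}$ studied by Hirschman and Wainger. Unboundedness on $L^p$ for $|\tfrac1p-\tfrac12|>\sigma/\rho$ I would obtain by testing against suitable superpositions of frequency-localized bumps at dyadic scales $2^k$, rescaled so that the oscillation $e^{i\kappa|\xi|^{\rho}}$ registers as a genuine loss of smoothness, producing functions $f_N$ with $\|T_{\alpha,\beta}f_N\|_p/\|f_N\|_p\to\infty$ (and reducing to $p<2$ by duality). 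This direction is softer than (2) and I would not expect it to be the bottleneck.
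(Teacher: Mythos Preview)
The paper does not prove this theorem; it is quoted as background and attributed to Hirschman and to Fefferman--Stein, with no argument supplied. So there is nothing in the paper to compare your proposal against.

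That said, your outline is essentially the classical route of those references: dyadic decomposition plus van der Corput for the $L^2$ bound, Riesz--Thorin for the open $L^p$ range, the Fefferman--Stein two-scale Calder\'on--Zygmund mechanism for the endpoint of (2), and stationary-phase asymptotics together with test functions for the negative result in (3). The ingredients and the identification of the endpoint in (2) as the real difficulty are correct. One small caution on wording: in the borderline case $\alpha+2=2\beta$ you have $\nu=0$, so the ``$L^2$ gain $2^{-j\nu}$'' you invoke for the coarse pieces vanishes; but in that case $\tfrac12-\tfrac{\beta-1}{\alpha}=0$ and the endpoint of (2) is just $p=2$, already handled by (1), so the two-scale argument is only needed when $\nu>0$ and your sketch goes through.
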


The case $p=1$ was treated by Fefferman in \cite{Fefferman70} where he proves
the following.

\begin{theorem}\label{Feffermanthm}
$T_{1,1}$ is of weak type $(1,1)$.
\end{theorem}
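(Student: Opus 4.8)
The plan is to follow the Calder\'on--Zygmund method, taking as the starting point the $L^{2}$ boundedness of $T_{1,1}$ (this is the case $\alpha=\beta=1$ of Theorem~\ref{Hirs-thm}(1), since $\alpha+2=3\ge2=2\beta$) and using the oscillation of $e^{it^{-1}}$ to compensate for the fact that $e^{it^{-1}}/t$ is not a Calder\'on--Zygmund kernel. Fix $f\in C_{0}^{\infty}(\R)$ with $\|f\|_{1}=1$ and $\lambda>0$, and perform a Calder\'on--Zygmund decomposition at level $\lambda$: $f=g+b$, $b=\sum_{k}b_{k}$, where each $b_{k}$ is supported in a dyadic interval $Q_{k}$ of length $\ell_{k}=2^{-m_{k}}$, $\int b_{k}=0$, $\|b_{k}\|_{1}\le C\lambda\ell_{k}$, $\sum_{k}\ell_{k}\le C/\lambda$, and $\|g\|_{\infty}\le C\lambda$, $\|g\|_{1}\le 1$. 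The good part is immediate: $\|T_{1,1}g\|_{2}^{2}\le C\|g\|_{2}^{2}\le C\lambda\|g\|_{1}\le C\lambda$, so $|\{|T_{1,1}g|>\lambda/2\}|\le C/\lambda$. Thus it remains to prove $|\{x\notin\Omega^{*}:|T_{1,1}b(x)|>\lambda/2\}|\le C/\lambda$ for a suitable open set $\Omega^{*}\supset\Omega:=\bigcup_{k}Q_{k}$ with $|\Omega^{*}|\le C/\lambda$.

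For the bad part I would decompose the kernel dyadically in $t$: $K(t)=e^{it^{-1}}t^{-1}\mathbf 1_{(0,1)}(t)=\sum_{j\ge 1}K_{j}(t)$ with $\supp K_{j}\subset\{t\sim 2^{-j}\}$, and use three properties of the pieces: $\|K_{j}\|_{1}\le C$ and $\|K_{j}'\|_{1}\le C2^{2j}$ (the derivative is dominated by the term coming from $\tfrac{d}{dt}e^{it^{-1}}$); by van der Corput's lemma $\widehat{K_{j}}$ is concentrated in $|\xi|\sim 2^{2j}$ with $\|\widehat{K_{j}}\|_{\infty}\le C2^{-j/2}$, so the operators $T^{(j)}f=K_{j}*f$ satisfy $\|T^{(j)}\|_{L^{2}\to L^{2}}\le C2^{-j/2}$ and are almost orthogonal on $L^{2}$; and $\supp(T^{(j)}b_{k})\subset Q_{k}+(0,2^{-j})$. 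Now fix a bad interval $Q_{k}$ and split $T_{1,1}b_{k}=\sum_{j\ge1}T^{(j)}b_{k}$ into the ranges $j>m_{k}$, $m_{k}/2\le j\le m_{k}$, and $j<m_{k}/2$. When $j>m_{k}$ the function $T^{(j)}b_{k}$ is supported in a fixed dilate $CQ_{k}$; collecting these dilates into $\Omega^{*}$ handles this range at the admissible cost $|\Omega^{*}|\le C\sum_{k}\ell_{k}\le C/\lambda$. When $j<m_{k}/2$ (kernel scale larger than $\sqrt{\ell_{k}}$) I would use $\int b_{k}=0$ together with the smoothness bound $\int_{\R}|T^{(j)}b_{k}|\le\|b_{k}\|_{1}\sup_{y\in Q_{k}}\|K_{j}(\cdot-y)-K_{j}(\cdot-c_{k})\|_{1}\le C\|b_{k}\|_{1}\min(1,\ell_{k}2^{2j})$; summing this geometric series over $j<m_{k}/2$ gives $\le C\|b_{k}\|_{1}\le C\lambda\ell_{k}$, which is summable in $k$.

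The main obstacle is the intermediate range $m_{k}/2\le j\le m_{k}$, where the kernel oscillates too slowly compared with $\ell_{k}$ for the cancellation above to help, yet each of these $\sim\log(1/\ell_{k})$ scales contributes only $\|b_{k}\|_{1}$ to a crude $L^{1}$ estimate and $\sum_{k}\log(1/\ell_{k})\lambda\ell_{k}$ need not be finite. Here the oscillation has to be used through the $L^{2}$ bound: since $\supp(T^{(j)}b_{k})$ has length $\sim 2^{-j}$, one gets $\int|T^{(j)}b_{k}|\le C2^{-j/2}\|T^{(j)}b_{k}\|_{2}\le C2^{-j}\|b_{k}\|_{2}$, which summed over $m_{k}/2\le j\le m_{k}$ costs $C\sqrt{\ell_{k}}\,\|b_{k}\|_{2}$. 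Since $\|b_{k}\|_{2}$ is not controlled by the bare decomposition, I would refine it by also splitting $f$ according to its size, $f=\sum_{s\ge0}f\,\mathbf 1_{\{2^{s}\lambda<|f|\le2^{s+1}\lambda\}}+f\,\mathbf 1_{\{|f|\le\lambda\}}$, so that the bad parts $b_{k}^{(s)}$ arising from the $s$-th layer satisfy $\|b_{k}^{(s)}\|_{\infty}\le C2^{s}\lambda$ and hence $\|b_{k}^{(s)}\|_{2}^{2}\le C2^{s}\lambda\,\|b_{k}^{(s)}\|_{1}$, while $\sum_{k,s}\|b_{k}^{(s)}\|_{1}\le C$ and $|\{2^{s}\lambda<|f|\le2^{s+1}\lambda\}|\le 2^{-s}/\lambda$; the part assembled from the layer $\{|f|\le\lambda\}$ and from the truncations is bounded by $C\lambda$ and is absorbed into the good term. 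The delicate point --- the technical core of the argument, going back to Fefferman \cite{Fefferman70} --- is then to estimate the intermediate contribution $W_{s}=\sum_{k}\sum_{m_{k}/2\le j\le m_{k}}T^{(j)}b_{k}^{(s)}$, distribute the height $\lambda/2$ among the layers via coefficients $c_{s}$ with $\sum_{s}c_{s}\le1/2$, and show $\sum_{s}|\{|W_{s}|>c_{s}\lambda\}|\le C/\lambda$: one uses the almost orthogonality of the $T^{(j)}$ (disjoint frequency supports $|\xi|\sim2^{2j}$) to get $\|W_{s}\|_{2}^{2}\lesssim\sum_{k}\ell_{k}^{1/2}\|b_{k}^{(s)}\|_{2}^{2}$, Cauchy--Schwarz in $k$, and the layer measure bound, and must balance the oscillatory gain $2^{-j/2}$, the support length $2^{-j}$, and the layer index $s$ carefully enough that the sum over $s$ converges. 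Carrying out this last step is where essentially all the difficulty lies; the remaining pieces are routine.
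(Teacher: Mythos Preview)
Your outline takes a genuinely different route from the paper (and from Fefferman's original argument, which the paper follows explicitly, citing Theorem~2' of \cite{Fefferman70}). The paper does \emph{not} use a standard Calder\'on--Zygmund decomposition plus a dyadic splitting of the kernel and a size-layering of $f$. Instead it replaces the Whitney decomposition of $\Omega=\{Mf>\alpha'\}$ by one (Lemma~\ref{altCZ}) in which each interval $I_k$ satisfies $\dist(I_k,\Omega^c)\sim\gamma'^{-1}(1/|I_k|)$; for $\gamma(t)=-1/t$ this is $\sqrt{|I_k|}$. The bad pieces $b_k$ live on enlarged intervals $I_k^*$ of length $\sim\sqrt{|I_k|}$, and one then exploits the modified H\"ormander condition (Lemma~\ref{K-diff}), which here reads $\int_{|x|\ge 2\sqrt{|y|}}|K(x-y)-K(x)|\,dx\le C$, together with a mollification $\tilde b_k=\phi_{|I_k|}*b_k$ at the \emph{smaller} scale $|I_k|$. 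This geometry is tuned so that the oscillatory scale $\sqrt{|I_k|}$ is already the support scale of $b_k$, and the whole ``intermediate range'' $m_k/2\le j\le m_k$ that you isolate simply never appears; no $L^2$ almost-orthogonality argument on dyadic kernel pieces and no layering of $f$ by height are needed.

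Your proposal, by contrast, is incomplete precisely where you flag it: the treatment of $W_s$. You record the estimate $\|W_s\|_2^2\lesssim\sum_k\ell_k^{1/2}\|b_k^{(s)}\|_2^2$ and then say one ``must balance the oscillatory gain $2^{-j/2}$, the support length $2^{-j}$, and the layer index $s$ carefully enough that the sum over $s$ converges,'' but you do not carry this out, and as written it does not close: inserting $\|b_k^{(s)}\|_2^2\le C2^s\lambda\|b_k^{(s)}\|_1\le C2^s\lambda^2\ell_k$ and applying Chebyshev leaves $\sum_s 2^s c_s^{-2}\sum_k\ell_k^{3/2}$, and the bare information $\sum_k\ell_k\le C/\lambda$ gives no control on $\sum_k\ell_k^{3/2}$ nor absorbs the factor $2^s$. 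A scheme of this type can be pushed through, but it needs further ingredients (for instance, decomposing each layer at a height depending on $s$, or exploiting additional orthogonality among the $b_k^{(s)}$), and those are the substance of the proof, not routine bookkeeping. Note also that the attribution is off: the ``technical core going back to Fefferman \cite{Fefferman70}'' is exactly the modified Whitney decomposition the paper uses, not the size-layering trick you sketch.
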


Cho and Yang in \cite{ChoYang} considered the operators
\begin{equation*}
T_{\lambda, \alpha,\beta}f(x) = \lim_{\e\to 0^{+}}
\int_\e^1  e^{i\lambda t^k} e^{it^{-\alpha}} f(x - t) \frac{dt}{t^\beta},
\end{equation*}
where $k\ge 2$ is an integer, and obtained estimates for the $L^2$ norm of
$T_{\lambda, \alpha,\beta}$ as $\lambda\to\infty$. Namely that 
$||T_{\lambda,\alpha,\beta}||\approx\lambda^{-(\alpha/2-\beta+1)/(\alpha+k)}$
when $k$ is even, and 
$||T_{\lambda,\alpha,\beta}||\approx\lambda^{-(\alpha/3-\beta+1)/(\alpha+k)}$
when $k$ is odd.

In the present work, we are interested in proving the analogue of Theorem 
\ref{Feffermanthm} when the oscillating factor is worse than a power.
Say, the oscillation could be $e^{1/t}$. Such operators were studied by the
first author in \cite{Folch99}. Given $0\leq \theta\leq 1$, let
\begin{equation}\label{Ttheta-oper}
T_{\theta}f(x)=\lim_{\epsilon\to
0^{+}}\int_{\epsilon}^{1}e^{i\gamma(t)}f(x-t)\frac{t^{-\theta}}{\psi(t)^{1-\theta}}dt, 
\end{equation}
where the functions $\gamma\in C^3((0,1])$ and $\psi\in C^2([0,1])$ satisfy
the following assumptions, for some $t_0, s_0, C>0$:
\begin{enumerate}
\item[(a.1)] $\gamma$, $\psi$ and their derivatives $\gamma'$, $\gamma''$ and
$\psi'$, are all monotone. We also assume $\gamma'(t) > 0$, with $\gamma'$
decreasing on $(0,1]$.
\item[(a.2)] For $0 < t < t_0$, 
\begin{equation}\label{der-ratios}
\Big| \frac{\psi'(t)}{\psi(t)} \Big|\le \frac{1}{2} 
\Big| \frac{\gamma'''(t)}{\gamma''(t)} \Big|.
\end{equation}
\item[(a.3)] For $s > s_0$, 
\begin{equation*}
|\gamma''(\gamma'^{-1}(2s))| \le C |\gamma''(\gamma'^{-1}(s))|.
\end{equation*}
\item[(a.4)] There exist $\epsilon > 0$, $A > 1 + \epsilon$ such that
\begin{equation*}
\gamma'(t) \ge A \gamma'((1+\epsilon)t),
\end{equation*}
for $0 < t < t_0$.
\item[(a.5)] There exists $1/2 < \lambda < 1$ such that
\begin{equation*}
|\gamma''(t)| \le C \gamma'(t)^{2\lambda},
\end{equation*}
for $0 < t < t_0$.
\end{enumerate}

The functions $\gamma(t) = -t^{1-\sigma}$ and $\psi(t) = t^{(\sigma+1)/2}$, for
$\sigma>1$, satisfy the previous assumptions, and correspond to the operator
$T_{\alpha,\beta}$ above with  $\alpha = \sigma - 1$ and 
$\beta = (\sigma + 1)/2 = (\alpha + 2)/2$.

The functions $\gamma(t) = e^{1/t}$ and $\psi(t) = t^{3/2}e^{-1/2t}$ also 
satisfy assumptions (a.1)-(a.5), and in this case $\psi$ is infinitely flat at
the origin.

\begin{remark}\label{inverse}
Assumption (a.1) implies the existence of the inverse $\gamma'^{-1}(s)$ of
$\gamma'(t)$ for $s\ge \gamma'(1)$, a fact that has been used in assumption
(a.3).
\end{remark}
\begin{remark}\label{necess-est}
The estimate \eqref{der-ratios} of assumption (a.2) implies
\begin{equation*}
\frac{1}{\psi(t)} \le C' \sqrt{|\gamma''(t)|}
\end{equation*}
for $0 < t < t_0$ and a constant $C'$. This estimate, as discussed in
\cite{Folch99}, is necessary for the operator $T_0$ to be bounded on $L^2(\R)$.
\end{remark}
\begin{remark}\label{reciprocal-est}
From assumption (a.4), for $0 < t < t_0$, $1/t < \gamma'(t)$. Hence, for such
$t$, $t < \gamma'^{-1}(1/t)$, as $\gamma'$ is decreasing.
\end{remark}
\begin{remark}\label{reciprocal-ratio-est}
Assumption (a.4) also implies the estimate
\begin{equation*}
|\gamma''(t)| \ge C'' \frac{\gamma'(t)}{t},
\end{equation*}
for a constant $C''$ and $0 < t < t_0$.
\end{remark}

The previous remarks were stated and verified in \cite{Folch99}, where the
following theorem is proved.

\begin{theorem}\label{Ttheta-thm}
Suppose $\gamma$ and $\psi$ satisfy assumptions \textnormal{(a.1)-(a.5)}. Then
\begin{enumerate}
\item $T_\theta$ is a bounded operator on $L^{p}(\R)$ for 
\begin{equation*}
\frac{1}{p} = \frac{1 + \theta}{2}
\end{equation*}
and $0\leq\theta < 1$ , and the operator norm $||T_{\theta}||_{L^{p}\to L^{p}}$
depends only on $\theta$.
\item $T_{1}$ is a bounded operator from $H^{1}(\R)$ to $L^{1}(\R)$.
\end{enumerate}
\end{theorem}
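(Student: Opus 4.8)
The plan is to obtain part (1) for $0<\theta<1$ by analytic interpolation between the two endpoints $\theta=0$ and $\theta=1$, so that the real content reduces to (i) the $L^2(\R)$-boundedness of $T_0$ (which is itself the $\theta=0$ instance of part (1)) and (ii) part (2). One embeds the operators in the analytic family $T_z$, $0\le\operatorname{Re}(z)\le1$, with kernel $e^{i\gamma(t)}\,t^{-z}\psi(t)^{z-1}\chi_{(0,1)}(t)$; since $|t^{-z}\psi(t)^{z-1}|=t^{-\operatorname{Re}(z)}\psi(t)^{\operatorname{Re}(z)-1}$ is independent of $\operatorname{Im}(z)$, the family has admissible growth, and the Fefferman--Stein complex interpolation theorem with endpoint spaces $H^1$ (at $\operatorname{Re}(z)=1$) and $L^2$ (at $\operatorname{Re}(z)=0$) yields $T_\theta\colon L^p\to L^p$ with $\tfrac1p=\tfrac{1-\theta}{2}+\theta=\tfrac{1+\theta}{2}$ and operator norm depending only on $\theta$.

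To prove that $T_0$ is bounded on $L^2$, Plancherel reduces matters to the uniform bound $\sup_\xi\big|\int_0^1 e^{i(\gamma(t)-t\xi)}\psi(t)^{-1}\,dt\big|<\infty$. The phase $\phi_\xi(t)=\gamma(t)-t\xi$ has $\phi_\xi'=\gamma'-\xi$, which by (a.1) decreases from $+\infty$, so there is a single stationary point $t_\xi=\gamma'^{-1}(\xi)$ when $\xi>\gamma'(1)$ and none otherwise (in particular for $\xi\le0$). On a dyadic window about $t_\xi$ one applies van der Corput's estimate with $\phi_\xi''=\gamma''$, getting a contribution $\lesssim|\gamma''(t_\xi)|^{-1/2}\psi(t_\xi)^{-1}=O(1)$ by Remark~\ref{necess-est}, where (a.3) sizes the window and (a.2) keeps $\psi^{-1}$ roughly constant on it; on the complement $|\phi_\xi'|$ is bounded below and one integrates by parts once, the control of $\tfrac{d}{dt}\big(\psi(t)^{-1}(\gamma'(t)-\xi)^{-1}\big)$ coming from the monotonicity in (a.1) and the ratio estimate (a.2); near $t=0$, where $\psi^{-1}$ is unbounded, the boundary term $\psi(t)^{-1}(\gamma'(t)-\xi)^{-1}\lesssim\psi(t)^{-1}/\gamma'(t)\lesssim\gamma'(t)^{\lambda-1}\to0$ is controlled by Remark~\ref{necess-est} and (a.5). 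The same stationary-phase scheme, with Remark~\ref{reciprocal-ratio-est} (\,$|\gamma''|\gtrsim\gamma'/t$\,) absorbing the extra factor $t^{-1}$, shows that $T_1$ is bounded on $L^2$ as well.

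For part (2) one reduces, via atomic decomposition, to the uniform estimate $\|T_1a\|_{L^1(\R)}\le C$ for mean-zero atoms $a$ with $\|a\|_\infty\le|I|^{-1}$ supported on an interval $I=I(x_0,r)$. If $r\ge1$, then $\supp T_1a\subseteq I+(0,1)$ has measure $\lesssim|I|$, and Cauchy--Schwarz together with the $L^2$-boundedness of $T_1$ and $\|a\|_2\le|I|^{-1/2}$ gives $O(1)$. If $r<1$, then on $\{|x-x_0|\le Kr\}$ the same two ingredients again give $O(1)$; on the far region $\{|x-x_0|>Kr\}$ one decomposes $T_1=\sum_{j\ge0}S_j$, where $S_j$ has kernel $e^{i\gamma(t)}\eta(2^jt)\,t^{-1}$ for a smooth partition $\sum_j\eta(2^jt)=\chi_{(0,1)}$, so that $S_j$ is localized both at the spatial scale $t\approx2^{-j}$ and at oscillation frequency $|\xi|\approx\Gamma_j:=\gamma'(2^{-j})$. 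A van der Corput analysis gives $\|S_j\|_{L^2\to L^2}\lesssim(2^j/\Gamma_j)^{1/2}$, and iterating (a.4) in the form $\gamma'(t)\ge A^n\gamma'((1+\e)^nt)$ forces $\Gamma_j\gtrsim2^{cj}$ with $c=\log A/\log(1+\e)>1$, so these operator norms decay geometrically; moreover each $S_ja$ is supported in an interval of length $\lesssim\max(r,2^{-j})$, and the vanishing moment of $a$ converts, through $|\widehat a(\xi)|\lesssim\min(1,r|\xi|)$, into extra smallness of the pieces whose frequency $\Gamma_j$ is low compared to $1/r$. Combining these facts one sums $\sum_j\int_{|x-x_0|>Kr}|S_ja|\lesssim1$; here (a.3) and (a.5) are used to control $\gamma''$ across the relevant scales.

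The decisive obstacle is this last summation. The kernel $e^{i\gamma(t)}/t$ of $T_1$ oscillates so violently near the origin — one has $|(e^{i\gamma(t)}/t)'|\approx\gamma'(t)/t$, which fails to be integrable — that it satisfies neither H\"ormander's regularity condition nor its usual substitutes at a single scale, so no classical Calder\'on--Zygmund argument produces the $H^1\to L^1$ bound; one is forced into the frequency- and scale-adapted decomposition above, in the spirit of Fefferman and Stein's treatment of $T_{\alpha,\beta}$, and the entire purpose of the quantitative hypotheses (a.3)-(a.5) is to make the resulting series over dyadic scales converge. By contrast, the interpolation step, the $L^2$ estimates, and the large-atom case are comparatively routine once the stationary-phase bounds are in hand.
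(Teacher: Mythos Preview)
The paper does not itself prove Theorem~\ref{Ttheta-thm}; it is quoted from \cite{Folch99}. The paper does, however, record the two main ingredients from that source --- Lemmas~\ref{K-est} and~\ref{K-diff} --- and these make the intended argument transparent enough to compare with your outline.

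Your overall architecture (Stein's complex interpolation between $\operatorname{Re} z=0$ and $\operatorname{Re} z=1$, stationary phase for the $L^2$ endpoint, atoms for the $H^1\to L^1$ endpoint) is correct and matches \cite{Folch99}. One point is glossed over: Stein interpolation requires operator bounds on the entire boundary lines, i.e.\ on $T_{i\beta}$ and $T_{1+i\beta}$ for all real $\beta$, not merely at $\beta=0$. Your remark that $|t^{-z}\psi(t)^{z-1}|$ is independent of $\operatorname{Im} z$ guarantees that the analytic family is admissible, but it does not by itself control the endpoint norms: the additional oscillatory factor $(\psi(t)/t)^{i\beta}=e^{i\beta\log(\psi(t)/t)}$ produces extra terms in every integration by parts, and this is exactly why Lemmas~\ref{K-est} and~\ref{K-diff} carry the explicit factor $(1+|\beta|)$. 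Your sketch needs to track the same dependence.

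More substantively, your assertion that the kernel of $T_1$ ``satisfies neither H\"ormander's regularity condition nor its usual substitutes at a single scale,'' so that one is forced into a dyadic decomposition $T_1=\sum_j S_j$, is contradicted by Lemma~\ref{K-diff}. The kernel \emph{does} satisfy a single-scale H\"ormander substitute, namely \eqref{K-singint}, in which the standard threshold $2|y|$ is replaced by the larger quantity $2\gamma'^{-1}(1/|y|)$; this is precisely how \cite{Folch99} handles the far region for atoms, and the present paper reuses the very same device in the proof of Theorem~\ref{weaktype-thm}. Your Littlewood--Paley alternative can be made to work --- one splits the sum at the index where $\gamma'(2^{-j})\approx 1/r$ and uses the cancellation $|\widehat a(\xi)|\lesssim r|\xi|$ on the low-frequency pieces, as you hint but do not carry out --- yet the route via \eqref{K-singint} is cleaner and avoids that summation entirely; the enlarged near region of size $\gamma'^{-1}(1/r)$ is then controlled using the pointwise multiplier decay of Lemma~\ref{K-est}, not merely bare $L^2$ boundedness.
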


In this work we prove the following theorem.

\begin{theorem}\label{weak-type11}
Under assumptions \textnormal{(a.1)-(a.5)}, $T_{1}$ is of weak type $(1,1)$.
\end{theorem}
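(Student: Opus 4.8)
The plan is to adapt Fefferman's Calder\'on--Zygmund proof of the weak-type bound for $T_{1,1}$ (Theorem \ref{Feffermanthm}), decomposing the kernel at the scale dictated by $\gamma$ rather than at a fixed power. Write $\mu$ for the level, so the assertion is $|\{x:|T_1f(x)|>\mu\}|\le C\mu^{-1}\|f\|_1$ for $f\in L^1(\R)$, $\mu>0$. Note first that for $\theta=1$ the factor $\psi(t)^{1-\theta}$ disappears, so $T_1$ is convolution with $K(t)=e^{i\gamma(t)}/t$ on $(0,1)$ (and $0$ elsewhere). Perform the Calder\'on--Zygmund decomposition $f=g+b$ at height $\mu$: $b=\sum_jb_j$ with $b_j$ supported in a dyadic interval $Q_j$ of length $\ell_j$, $\int b_j=0$, $\|b_j\|_1\le C\mu\ell_j$, the $Q_j$ disjoint with $\sum_j\ell_j\le C\mu^{-1}\|f\|_1$, and $\|g\|_\infty\le C\mu$. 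Let $E=\bigcup_jQ_j^{*}$, where $Q_j^{*}$ is a fixed dilate of $Q_j$, so $|E|\le C\mu^{-1}\|f\|_1$; it remains to bound $|\{x\notin E:|T_1(g+b)(x)|>\mu/2\}|$. Decompose $K=\sum_{k\ge0}K_k$, $K_k=\phi_kK$, with $\{\phi_k\}$ a smooth partition of unity at scales $2^{-k}$. The good part $g$ is disposed of by the $L^2$ bound $\|T_1g\|_2\le C\|g\|_2$ together with $\|g\|_2^2\le\|g\|_\infty\|g\|_1\le C\mu\|f\|_1$ and Chebyshev; the $L^2$ bound itself falls out of the oscillatory estimates below, which give $\|\widehat{K_k}\|_\infty\le C\,(2^k/\gamma'(2^{-k}))^{1/2}\le C$ (Remark \ref{reciprocal-est}) with the $\widehat{K_k}$ supported in the geometrically separated bands $|\xi|\approx\gamma'(2^{-k})$, whence $\|\widehat K\|_\infty\le C$.

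For the bad part fix $b_j$ and set $\delta_j:=\gamma'^{-1}(1/\ell_j)$; by Remark \ref{reciprocal-est}, $\ell_j<\delta_j$. Split $T_1b_j=\sum_kK_k*b_j$ into a \emph{high} ($2^{-k}\lesssim\ell_j$), an \emph{intermediate} ($\ell_j\lesssim2^{-k}\lesssim\delta_j$) and a \emph{low} ($2^{-k}\gtrsim\delta_j$) part. Since $K$ is supported in $(0,1)$ one has $\supp(K_k*b_j)\subset Q_j+\supp\phi_k$, so for $x\notin E$ the high part vanishes. For the low part, the mean-zero property gives $|K_k*b_j(x)|\le\|b_j\|_1\,\ell_j\,\|K_k'\|_\infty$ with $\|K_k'\|_\infty\le C\gamma'(2^{-k})2^k$ on $\supp\phi_k$ (using $1/t\le\gamma'(t)$, Remark \ref{reciprocal-est}); as this is supported on a set of measure $\approx2^{-k}$, $\|K_k*b_j\|_1\le C\ell_j\gamma'(2^{-k})\|b_j\|_1$. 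Summing over $2^{-k}\gtrsim\delta_j$, the monotonicity and the lower bound for $\gamma''$ of Remark \ref{reciprocal-ratio-est} make $k\mapsto\gamma'(2^{-k})$ grow fast enough that the sum is $\le C\gamma'(\delta_j)\ell_j\|b_j\|_1=C\|b_j\|_1$ (by the very definition of $\delta_j$). Hence the low part of $T_1b$ has $L^1(E^c)$-norm $\le C\sum_j\|b_j\|_1\le C\|f\|_1$, which suffices.

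The intermediate part $B_j:=\sum_{\ell_j\lesssim2^{-k}\lesssim\delta_j}K_k*b_j$ is the crux: there the mean-value inequality is useless and the truncated kernel fails the H\"ormander condition at scale $\ell_j$, while enlarging $E$ to the $\delta_j$-neighbourhood of $Q_j$ is not allowed, since $\sum_j\delta_j$ need not be $\lesssim\mu^{-1}\|f\|_1$. I would estimate $B_j$ in $L^2$. A stationary-phase analysis of $\widehat{K_k}(\xi)=\int\phi_k(t)t^{-1}e^{i(\gamma(t)-t\xi)}\,dt$ — the stationary point $t=\gamma'^{-1}(\xi)$ lies in $\supp\phi_k$ exactly when $|\xi|\approx\gamma'(2^{-k})$, where $\|\widehat{K_k}\|_\infty\le C\,2^k|\gamma''(2^{-k})|^{-1/2}\le C(2^k/\gamma'(2^{-k}))^{1/2}$ by Remark \ref{reciprocal-ratio-est}, while off that band repeated integration by parts yields rapid decay, assumption (a.5) (with $1/2<\lambda<1$) being precisely what makes each integration by parts gain and (a.3) controlling the variation of $\gamma''$ on $\supp\phi_k$ — shows that the $\widehat{K_k}$ occupy essentially disjoint frequency bands of width $\approx\gamma'(2^{-k})$. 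Using $|\widehat{b_j}|\le\|b_j\|_1$ this gives
\[
\|B_j\|_2^2\ \le\ C\!\!\sum_{\ell_j\lesssim2^{-k}\lesssim\delta_j}\!\!\|\widehat{K_k}\|_\infty^2\,\gamma'(2^{-k})\,\|b_j\|_1^2\ \le\ C\!\!\sum_{\ell_j\lesssim2^{-k}\lesssim\delta_j}\!\!2^k\,\|b_j\|_1^2\ \le\ C\,\ell_j^{-1}\|b_j\|_1^2 .
\]
If the family $\{B_j\}_j$ is almost orthogonal in $L^2$ — which I expect, and whose verification, by a Cotlar--Stein/$TT^{*}$ argument resting on the time--frequency separation just described (and so on (a.3), (a.5) and the precise phase $\gamma$), is the one genuinely difficult step — then $\|\sum_jB_j\|_2^2\le C\sum_j\|B_j\|_2^2\le C\sum_j\ell_j^{-1}\|b_j\|_1^2\le C\mu^2\sum_j\ell_j\le C\mu\|f\|_1$, and Chebyshev finishes the estimate. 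Thus the theorem reduces to that almost-orthogonality bound for the intermediate pieces; everything else is the usual Calder\'on--Zygmund bookkeeping, here adapted to the $\gamma$-scaling.
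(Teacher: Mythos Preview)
Your outline leaves the decisive step unproven: the almost-orthogonality of the intermediate pieces $\{B_j\}_j$. You flag this yourself, but it is not a routine verification; for a general $\gamma$ satisfying (a.1)--(a.5) the Cotlar--Stein argument you sketch would require delicate control of mixed terms $\langle B_j,B_{j'}\rangle$ across different spatial scales $\ell_j,\ell_{j'}$ and different frequency bands, and nothing in the hypotheses obviously delivers the needed off-diagonal decay. So as written this is a genuine gap, not bookkeeping.

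More to the point, the paper sidesteps the intermediate range entirely, and does so precisely by the move you dismiss. You write that ``enlarging $E$ to the $\delta_j$-neighbourhood of $Q_j$ is not allowed, since $\sum_j\delta_j$ need not be $\lesssim\mu^{-1}\|f\|_1$.'' That is true for the \emph{standard} Whitney decomposition, but the paper proves a modified Whitney lemma (Lemma~\ref{altCZ}) in which the intervals $I_k$ are chosen so that $\dist(I_k,\Omega^c)\approx\gamma'^{-1}(1/|I_k|)$ rather than $\approx|I_k|$. With this choice the enlarged intervals $I_k^*$ of length $\approx\gamma'^{-1}(1/|I_k|)$ still have bounded overlap (Remark~\ref{finite-int}), hence $\sum_k|I_k^*|\lesssim|\Omega|\lesssim\mu^{-1}\|f\|_1$. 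One then runs Calder\'on--Zygmund on the $I_k^*$ themselves: the exceptional set is $\bigcup 3I_k^*$, and for $x\notin 3I_k^*$ and $y\in I_k^*$ one automatically has $|x-y|\ge 2\gamma'^{-1}(1/|I_k|)$, which is exactly the region where the H\"ormander-type estimate of Lemma~\ref{K-diff} applies. A mollification $\tilde b_k=\phi_k*b_k$ at scale $|I_k|$ then lets both $K*b_k-K*\tilde b_k$ and $K*\tilde b_k$ be controlled in $L^1$ off $3I_k^*$ directly from Lemma~\ref{K-diff}, with no $L^2$/almost-orthogonality step at all. In short: the paper trades your hard $TT^*$ argument for a single combinatorial lemma about a $\gamma$-adapted Whitney decomposition, after which everything reduces to the already-known Lemmas~\ref{K-est} and~\ref{K-diff}.
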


Even though we have a singularity of the form $1/t$, one cannot apply standard
Calder\'on-Zygmund theory for the operator $T_1$ because of the oscillating
factor $e^{i\gamma(t)}$. Chanillo and Christ in \cite{ChanilloChrist87} proved
weak type $(1,1)$ estimates for operators with oscillations of the form
$e^{iP(t)}$, where $P$ is a polynomial. Later, Folch-Gabayet and Wright
\cite{FGWright12} considered oscillations of the form $e^{iR(t)}$, where $R$ is
a rational function. 

For the proof of Theorem \ref{weak-type11}, we consider, for $\e>0$ and 
$\beta\in\R$, the kernel $K_{\e,\beta}$ given by
\begin{equation}\label{kernel}
K_{\e, \beta}(x) =
\begin{cases}
\dfrac{e^{i\gamma(x)}}{x^{1+i\beta}\psi(x)^{-i\beta}} & \e \le x \le 1\\
0 & \text{otherwise,}
\end{cases}
\end{equation}
where the functions $\gamma$ and $\psi$ satisfy assumptions (a.1)-(a.5). In
\cite{Folch99}, the following properties of $K_{\e,\beta}$ were proved.

\begin{lemma}\label{K-est}
There exists a constant $C$, independent of $\e$ and $\beta$, such that,
for $\xi\in\R$, $|\xi|>1$,
\begin{equation*}
|\widehat{K_{\e,\beta}}(\xi)| \le 
\frac{C(1+|\beta|)}{\sqrt{|\gamma''(\gamma'^{-1}(|\xi|))|}\gamma'^{-1}(|\xi|)},
\end{equation*}
and, for $|\xi|\le 1$,
\begin{equation*}
|\widehat{K_{\e,\beta}}(\xi)| \le C(1+|\beta|).
\end{equation*}
\end{lemma}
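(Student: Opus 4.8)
The strategy is the standard van der Corput / stationary phase analysis adapted to the non-polynomial phase $\gamma$, together with a dyadic decomposition that the assumptions (a.1)--(a.5) are precisely designed to handle. Write
\begin{equation*}
\widehat{K_{\e,\beta}}(\xi) = \int_\e^1 e^{i(\gamma(t) - \xi t)}\,\frac{dt}{t^{1+i\beta}\psi(t)^{-i\beta}}.
\end{equation*}
Since $K_{\e,\beta}$ is supported in $[0,1]$ and its kernel has size $|t^{-1-i\beta}\psi(t)^{i\beta}| = t^{-1}$, a crude bound gives the $|\xi|\le 1$ estimate only after one integration by parts to kill the logarithmic divergence at $t=\e$; in fact near $|\xi|\le 1$ we compare with $\xi=0$ and use that $\gamma'$ is large (Remark \ref{reciprocal-est}) so the phase has no stationary point and a single integration by parts against $\gamma'(t)-\xi$ produces a bounded integral with the factor $(1+|\beta|)$ coming from differentiating $t^{-1-i\beta}\psi(t)^{i\beta}$. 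This is the easy regime; the substance is $|\xi| > 1$.

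\textbf{Main estimate, $|\xi|>1$.} Fix $\xi>1$ (the case $\xi<-1$ is similar since $\gamma'>0$ forces the phase to be non-stationary there, giving an even better bound). Because $\gamma'$ is positive and decreasing, the phase $\phi(t) = \gamma(t)-\xi t$ has a unique critical point $t_\xi = \gamma'^{-1}(\xi)$ when $\xi \ge \gamma'(1)$, and $\phi'$ does not vanish otherwise. Split the integral into the \emph{stationary piece} $I_{\mathrm{st}}$ over $\{t : |t - t_\xi| \le \delta_\xi\}$ and the \emph{non-stationary piece} $I_{\mathrm{ns}}$ over the complement, where $\delta_\xi$ is chosen of the order of the natural length scale of the second derivative, namely $\delta_\xi \sim |\gamma''(t_\xi)|^{-1/2}$. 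On the stationary piece one estimates trivially by the length times the sup of the amplitude: the amplitude is $\le t_\xi^{-1}$, and using Remark \ref{reciprocal-est} ($t_\xi = \gamma'^{-1}(\xi) > 1/\xi$, hence this is harmless relative to the target) together with assumption (a.3) (which guarantees $|\gamma''|$ does not oscillate badly under the dilation $t_\xi \mapsto t_\xi \pm \delta_\xi$, so the amplitude and $\delta_\xi$ are comparable across the whole stationary window) one gets
\begin{equation*}
|I_{\mathrm{st}}| \le \frac{C\,\delta_\xi}{t_\xi} = \frac{C}{\sqrt{|\gamma''(t_\xi)|}\,t_\xi} = \frac{C}{\sqrt{|\gamma''(\gamma'^{-1}(\xi))|}\,\gamma'^{-1}(\xi)},
\end{equation*}
which is exactly the asserted bound (with the $(1+|\beta|)$ only making it larger). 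Here assumption (a.5) is what ensures $\delta_\xi \ge$ (something $\gtrsim$) $t_\xi$ never happens in a way that would push the window past the support or past $t_0$; more precisely (a.5) bounds $|\gamma''|$ by $\gamma'^{2\lambda}$ with $\lambda<1$, forcing $\delta_\xi = |\gamma''(t_\xi)|^{-1/2} \gtrsim \xi^{-\lambda} \gg \xi^{-1} \sim$ nothing dangerous, and more importantly $\lambda > 1/2$ keeps $\delta_\xi$ small enough that the van der Corput bound on $I_{\mathrm{ns}}$ below does not lose.

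\textbf{Non-stationary piece.} On $I_{\mathrm{ns}}$ integrate by parts once, writing $e^{i\phi} = \frac{1}{i\phi'}\frac{d}{dt}e^{i\phi}$. The boundary terms at $t = t_\xi \pm \delta_\xi$ contribute $\lesssim \frac{1}{t_\xi|\phi'(t_\xi\pm\delta_\xi)|}$; since $|\phi'(t_\xi\pm\delta_\xi)| = |\gamma'(t_\xi\pm\delta_\xi)-\gamma'(t_\xi)| \sim |\gamma''(t_\xi)|\delta_\xi = \sqrt{|\gamma''(t_\xi)|}$ (again using (a.3) for the monotonicity/comparability of $\gamma''$ over the window, and (a.1) for monotonicity of $\gamma''$), this is of the right size. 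The remaining integral has integrand $\frac{d}{dt}\big(\frac{1}{i\phi'}\cdot t^{-1-i\beta}\psi^{i\beta}\big)$; differentiating produces three terms: one with $\phi''/\phi'^2 = \gamma''/\phi'^2$, one with $t^{-1}/\phi'$, and one with $(i\beta)(\psi'/\psi - 1/t)/\phi'$ — the last is where the factor $(1+|\beta|)$ enters, and this is precisely where assumption (a.2), i.e. $|\psi'/\psi| \le \tfrac12|\gamma'''/\gamma''|$, is used to control $\psi'/\psi$ against the scale of $\gamma$. One then shows each term, after integrating $dt$ over the non-stationary region and using that $|\phi'(t)| \gtrsim |\gamma''(t_\xi)|\,|t-t_\xi|$ away from the critical point (monotonicity of $\gamma'$, plus (a.3)/(a.4) to compare $\gamma'$ values at dyadically separated points — this is exactly the role of assumption (a.4), $\gamma'(t)\ge A\gamma'((1+\e)t)$, which gives geometric decay of $\gamma'$ and hence a convergent dyadic sum), is bounded by $\frac{C(1+|\beta|)}{\sqrt{|\gamma''(t_\xi)|}\,t_\xi}$. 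The dyadic sum over shells $|t - t_\xi| \sim 2^j\delta_\xi$ converges because of (a.4), and the contribution near $t=\e$ (if $\e$ is far below $t_\xi$) is handled by the same integration by parts with no boundary issue since $K_{\e,\beta}$ vanishes there.

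\textbf{Expected main obstacle.} The routine part is the stationary-phase bookkeeping; the genuinely delicate point is making the integration-by-parts estimate on $I_{\mathrm{ns}}$ \emph{uniform in $\e$ and $\beta$} while the phase $\phi' = \gamma' - \xi$ can come arbitrarily close to zero at the edge of the stationary window and $\gamma'''$ (which appears through $(\phi'')' $ if one were tempted to integrate by parts twice, and through (a.2)) is only controlled in ratio form. The way through is to \emph{not} integrate by parts twice — a single integration by parts plus the trivial bound on the $\delta_\xi$-window suffices, provided $\delta_\xi$ is calibrated exactly to $|\gamma''(t_\xi)|^{-1/2}$ — and to extract every comparison of $\gamma'$, $\gamma''$ at nearby or dyadically-spaced points from (a.3) and (a.4) rather than from Taylor expansion, which is unavailable for a phase as wild as $e^{1/t}$. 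Assumption (a.5) is the safety valve ensuring the window $\delta_\xi$ is neither too long (it must stay inside $(0,t_0)$ and not reach $0$) nor so short that the van der Corput gain is lost; tracking the interplay between $\lambda<1$ and $\lambda>1/2$ here is the crux.
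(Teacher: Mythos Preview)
Your proposal is correct and takes essentially the same approach the paper indicates: the paper does not actually prove this lemma but states that it ``follows from stationary phase methods'' and refers to \cite{Folch99} for the details, which is precisely the van der Corput/stationary phase analysis with window $\delta_\xi\sim|\gamma''(t_\xi)|^{-1/2}$ that you outline. Your identification of where each assumption (a.1)--(a.5) enters --- (a.2) for the $\psi'/\psi$ term producing the $(1+|\beta|)$ factor, (a.3)--(a.4) for dyadic comparability of $\gamma',\gamma''$, and (a.5) for calibrating the window --- matches the structure of the argument in \cite{Folch99}.
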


\begin{lemma}\label{K-diff}
There exist $C>0, \eta>0$ such that, for $y\in\R$, $|y|<\eta$,
\begin{equation}\label{K-singint}
\int_{|x|\ge 2\gamma'^{-1}(1/|y|)} 
|K_{\e,\beta}(x - y) - K_{\e,\beta}(x)| dx \le C(1 + |\beta|).
\end{equation}
\end{lemma}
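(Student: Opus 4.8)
Here is the plan I would follow to prove \eqref{K-singint}.

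\smallskip

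The idea is a purely real‑variable one: control the smooth part of $K_{\e,\beta}$ through its derivative and treat the two jumps of $K_{\e,\beta}$ (at $x=\e$ and at $x=1$) by hand; no cancellation from the factor $e^{i\gamma}$ is exploited, the whole gain coming from the excision $|x|\ge2\gamma'^{-1}(1/|y|)$. Set $\tau=\gamma'^{-1}(1/|y|)$, so $\gamma'(\tau)=1/|y|$. Since $\gamma'(t)>1/t$ near $0$ (the consequence of (a.4) recorded before Remark~\ref{reciprocal-est}) we have $\gamma'(t)\to\infty$ as $t\to0^{+}$, hence $\tau\to0$ as $y\to0$; shrinking $\eta$ we may assume $\tau<t_{0}$. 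Also $|y|=1/\gamma'(\tau)\le\tau$ by Remark~\ref{reciprocal-est}, and $|K_{\e,\beta}(x)|=1/x$ on $[\e,1]$. The integrand of \eqref{K-singint} is supported where $x$ or $x-y$ lies in $[\e,1]$, an interval of length $\le1+|y|$. Around $x=1$ there is a layer of width $\le|y|$ on which the integrand is $\le2/x$, contributing $O(1)$. Around $x=\e$ there is a layer of width $\le|y|$, contained in $\{|x|<\e+|y|\}$, on which the integrand is $\le1/\e$: if $\e<|y|$ then $\e+|y|<2|y|\le2\tau$, so this layer is excised by the domain of integration; if $\e\ge|y|$ its contribution is $\le|y|/\e\le1$. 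It therefore remains to estimate the integral over the ``bulk'', where both $x$ and $x-y$ lie in $[\e,1]$ and $|x|\ge2\tau$; there $x-y\ge\tau$ and $x\ge2\tau$.

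\smallskip

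On the bulk, $K_{\e,\beta}'(u)=\bigl(i\gamma'(u)-\frac{1+i\beta}{u}+i\beta\frac{\psi'(u)}{\psi(u)}\bigr)K_{\e,\beta}(u)$, so $|K_{\e,\beta}'(u)|\le\frac1u\bigl(\gamma'(u)+\frac{1+|\beta|}{u}+|\beta|\,|\frac{\psi'(u)}{\psi(u)}|\bigr)$; by the mean value theorem $|K_{\e,\beta}(x-y)-K_{\e,\beta}(x)|\le|y|\sup_{[x-y,x]}|K_{\e,\beta}'|$, and by the monotonicity in (a.1) each supremum is attained at an endpoint. Substituting that endpoint for $u$ and enlarging the domain to $[\tau,1]$, the bulk contribution is bounded by $|y|$ times
\[
\int_{\tau}^{1}\frac{\gamma'(u)}{u}\,du\ +\ (1+|\beta|)\int_{\tau}^{1}\frac{du}{u^{2}}\ +\ |\beta|\int_{\tau}^{1}\frac{|\psi'(u)/\psi(u)|}{u}\,du .
\]
The middle integral is $\le(1+|\beta|)/\tau$, and $|y|/\tau\le1$, so this term is $\le1+|\beta|$. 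For the first, split $[\tau,1]$ into dyadic intervals $[2^{j}\tau,2^{j+1}\tau]$ together with a final piece inside $[t_{0},1]$ (on which $\gamma'$ is bounded, so it is harmless): on $[2^{j}\tau,2^{j+1}\tau]$ one has $\gamma'(u)/u\le\gamma'(2^{j}\tau)/(2^{j}\tau)$, while iterating (a.4) gives $\gamma'(2^{j}\tau)\le Ac^{j}\gamma'(\tau)$ with $c=2^{-\log A/\log(1+\e)}<1$ (since $A>1+\e$), whence $|y|\int_{2^{j}\tau}^{2^{j+1}\tau}\gamma'(u)/u\,du\le|y|\gamma'(2^{j}\tau)\le Ac^{j}$, and the series sums to a constant.

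\smallskip

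The last, $\psi$‑dependent integral is the main obstacle. Here I would first use (a.2) and Remark~\ref{reciprocal-ratio-est}, i.e.\ $|\gamma''(u)|\ge C''\gamma'(u)/u$, to get
\[
\frac{|\psi'(u)/\psi(u)|}{u}\ \le\ \frac12\,\frac{|\gamma'''(u)|}{|\gamma''(u)|\,u}\ \le\ \frac{1}{2C''}\,\frac{|\gamma'''(u)|}{\gamma'(u)} ,
\]
and then integrate by parts (legitimate because $\gamma'''$ has a fixed sign, by (a.1)): writing $\gamma''/\gamma'=(\log\gamma')'$,
\[
\int_{\tau}^{1}\frac{|\gamma'''(u)|}{\gamma'(u)}\,du\ \le\ \frac{|\gamma''(\tau)|}{\gamma'(\tau)}\ +\ \int_{\tau}^{1}\Bigl(\frac{\gamma''(u)}{\gamma'(u)}\Bigr)^{2}du
\]
when $\gamma'''\ge0$, while when $\gamma'''\le0$ the left side is $\le|\gamma''(1)|/\gamma'(1)$ and the claim is immediate. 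Finally absorb the prefactor $|y|=1/\gamma'(\tau)$ using (a.5), $|\gamma''(u)|\le C\gamma'(u)^{2\lambda}$ with $\frac12<\lambda<1$: the boundary term contributes $\le C\gamma'(\tau)^{2\lambda-2}$, and, since $(\gamma''/\gamma')^{2}\le C\gamma'(u)^{2\lambda-1}\,|(\log\gamma')'(u)|$ with $\gamma'(u)^{2\lambda-1}$ decreasing, $\int_{\tau}^{1}(\gamma''/\gamma')^{2}\,du\le C\gamma'(\tau)^{2\lambda-1}\log\gamma'(\tau)$, so that term contributes $\le C\gamma'(\tau)^{2\lambda-2}\log\gamma'(\tau)$. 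Because $2\lambda-2<0$ and $\gamma'(\tau)\to\infty$, both quantities are bounded. Adding the three contributions to the $O(1)$ from the boundary layers yields \eqref{K-singint} with $C$ independent of $\e$, $\beta$ and $y$.
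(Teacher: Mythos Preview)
Your argument is correct and matches the approach the paper cites from \cite{Folch99}: direct real-variable estimation of $K_{\e,\beta}(x-y)-K_{\e,\beta}(x)$ through $|K_{\e,\beta}'|$, with the geometric decay from (a.4) handling the $\gamma'$-term and (a.2) together with Remark~\ref{reciprocal-ratio-est} and (a.5) handling the $\psi$-term. One minor point: the claim that ``each supremum is attained at an endpoint'' would need monotonicity of $|\psi'/\psi|$, which is not among the hypotheses, but your displayed bound follows anyway from $K_{\e,\beta}(x-y)-K_{\e,\beta}(x)=\int_{x}^{x-y}K_{\e,\beta}'$ and Fubini, so no monotonicity is required there (also note that under the standing assumptions $\gamma''<0$ and $|\gamma''(t)|\to\infty$ as $t\to0^{+}$, so $\gamma'''\ge0$ is forced and your second case is vacuous, though harmless).
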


Lemma \ref{K-est} follows from stationary phase methods, while Lemma
\ref{K-diff} follows from explicit estimates on the difference in the integral.
Both proofs heavily use the assumptions (a.1)-(a.5). See \cite{Folch99} for
details.

%%%%%%%%%%%%%%%%%%%%%%%%%%%%%%%%%%%%%%%%%%%%%%%%%%%%%%%%%%%%%%%%%%%
%%%%%%%%%%%%%%%%%%%%%%%%%%%%%%%%%%%%%%%%%%%%%%%%%%%%%%%%%%%%%%%%%%%
\section{Proof of the main theorem}
%%%%%%%%%%%%%%%%%%%%%%%%%%%%%%%%%%%%%%%%%%%%%%%%%%%%%%%%%%%%%%%%%%%
%%%%%%%%%%%%%%%%%%%%%%%%%%%%%%%%%%%%%%%%%%%%%%%%%%%%%%%%%%%%%%%%%%%

In the rest of this paper we will always assume (a.1)-(a.5). Theorem
\ref{weak-type11} will be a consequence of the following  theorem.

%%%%%%%%%%%%%%%%%%%%%%%%%%%%%%%%%%%%%%%%%%%%%%%%%%%%%% Theorem
\begin{theorem}\label{weaktype-thm}
There exists a constant $A$, independent of $\e$ and $\beta$, such that, for
all $f\in L^1(\R)$ and $\alpha>0$, 
\begin{equation}\label{weaktype-est}
|\{ x\in\R: |T_{\e,\beta}f(x)|>\alpha \}| \le \frac{A(1+|\beta|)}{\alpha}||f||_{L^1},
\end{equation}
where $T_{\e,\beta}$ is the convolution operator 
\[
T_{\e,\beta}f = K_{\e,\beta}*f.
\]
\end{theorem}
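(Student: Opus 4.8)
The plan is to prove Theorem \ref{weaktype-thm} by a Calder\'on--Zygmund decomposition adapted to the non-standard geometry dictated by the oscillation. Given $f\in L^1$ and $\alpha>0$, perform a Calder\'on--Zygmund decomposition of $f$ at height $\alpha$, writing $f = g + b$ where $\|g\|_\infty \lesssim \alpha$, $\|g\|_1 \le \|f\|_1$, and $b = \sum_j b_j$ with each $b_j$ supported on a dyadic interval $I_j = [c_j - \ell_j, c_j + \ell_j]$, having mean zero, $\|b_j\|_1 \lesssim \alpha |I_j|$, and $\sum_j |I_j| \lesssim \alpha^{-1}\|f\|_1$. The good part is handled by $L^2$ boundedness: by Lemma \ref{K-est}, $|\widehat{K_{\e,\beta}}(\xi)| \le C(1+|\beta|)$ for all $\xi$ (the large-$|\xi|$ bound is $\le C(1+|\beta|)$ as well since $\sqrt{|\gamma''(\gamma'^{-1}(|\xi|))|}\,\gamma'^{-1}(|\xi|) \gtrsim 1$ by Remarks \ref{reciprocal-est} and \ref{reciprocal-ratio-est}), so $\|T_{\e,\beta}g\|_2^2 \le C(1+|\beta|)^2 \|g\|_2^2 \lesssim (1+|\beta|)^2 \alpha \|f\|_1$, and Chebyshev gives the desired bound on $\{|T_{\e,\beta}g| > \alpha/2\}$ with the claimed dependence on $(1+|\beta|)$.

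The bad part is where the oscillation forces a departure from the classical argument. In the classical case one would enlarge each $I_j$ to $2I_j$ and use the standard H\"ormander condition off $2I_j$; here, instead, the relevant "forbidden zone" around $I_j$ is the interval $I_j^* = [c_j - \rho_j, c_j + \rho_j]$ where $\rho_j = \max\{\ell_j,\ 2\gamma'^{-1}(1/\ell_j)\}$ (or rather $\gamma'^{-1}(1/(2\ell_j))$ up to constants), which is generally much larger than $I_j$ because $\gamma'^{-1}(1/\ell_j) \gg \ell_j$ for small $\ell_j$ by Remark \ref{reciprocal-est}. We set $\Omega^* = \bigcup_j I_j^*$. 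The first task is to estimate $|\Omega^*| \lesssim \alpha^{-1}\|f\|_1$: this is NOT automatic because $\gamma'^{-1}(1/\ell)$ is superlinear in $\ell$, so one must use assumption (a.4) — which via Remark \ref{reciprocal-est} controls how fast $\gamma'^{-1}(1/t)$ grows, and more precisely a doubling-type or summability estimate derived from the geometric decay $\gamma'(t) \ge A\gamma'((1+\e)t)$ — to show $\sum_j \gamma'^{-1}(1/\ell_j) \lesssim \sum_j \ell_j$, or to sum over the dyadic scales directly. Away from $\Omega^*$, we must bound $\int_{(\Omega^*)^c} |T_{\e,\beta}b(x)|\,dx$, and here we use the mean-zero property of $b_j$ together with Lemma \ref{K-diff}: for $x \notin I_j^*$,
\begin{equation*}
T_{\e,\beta}b_j(x) = \int_{I_j} \bigl(K_{\e,\beta}(x-y) - K_{\e,\beta}(x-c_j)\bigr) b_j(y)\,dy,
\end{equation*}
and integrating in $x$ over $\{|x - c_j| \ge 2\gamma'^{-1}(1/\ell_j)\} \supseteq (I_j^*)^c$, Fubini and \eqref{K-singint} (applied with $y$ replaced by $y - c_j$, which satisfies $|y-c_j|\le\ell_j < \eta$ for $\ell_j$ small) yield $\int_{(I_j^*)^c}|T_{\e,\beta}b_j| \lesssim (1+|\beta|)\|b_j\|_1 \lesssim (1+|\beta|)\alpha|I_j|$; summing over $j$ and invoking Chebyshev gives the bound on $\{x \notin \Omega^*: |T_{\e,\beta}b| > \alpha/2\}$.

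The main obstacle I anticipate is twofold. First, the large intervals $I_j$ (those with $\ell_j \ge t_0$ or $\ell_j$ comparable to $1$) must be treated separately, since the assumptions and Lemma \ref{K-diff} only apply for small $|y|$; but since the kernel $K_{\e,\beta}$ is supported in $[\e,1]$ and is $L^1$-bounded on the region $x \sim 1$ (there $|K_{\e,\beta}(x)| \le |\psi(x)|^{\beta}/x \lesssim 1$ uniformly, using $\psi \in C^2([0,1])$ bounded), the contribution of $T_{\e,\beta}$ restricted to $x$ away from the origin is harmless and can be absorbed, or alternatively one only needs to decompose the part of $f$ relevant to the singularity. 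Second and more seriously, verifying $|\Omega^*| \lesssim \alpha^{-1}\|f\|_1$ requires genuinely exploiting (a.4): the inequality $\gamma'(t) \ge A\gamma'((1+\e)t)$ implies that on dyadic-type scales $\gamma'^{-1}(1/t)/t$ cannot blow up too fast relative to the number of scales, so that $\sum_j \gamma'^{-1}(1/\ell_j)$, grouped by the scale of $\ell_j$ and using $\sum_j \ell_j \lesssim \alpha^{-1}\|f\|_1$ within each scale, telescopes to a geometric series controlled by $\alpha^{-1}\|f\|_1$. Getting this counting argument clean — in particular confirming that the enlargement factors $\gamma'^{-1}(1/\ell_j)/\ell_j$ are summable against the $|I_j|$ after the dyadic regrouping — is the technical heart of the proof.
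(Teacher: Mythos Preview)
Your overall architecture (Calder\'on--Zygmund split, $L^2$ on the good part, H\"ormander-type estimate \eqref{K-singint} on the bad part off an enlarged exceptional set) matches the paper's, but the step you flag as ``the technical heart'' is in fact false as stated, and this is where your argument breaks.

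You perform a \emph{standard} CZ decomposition at height $\alpha$, obtain stopping intervals $I_j$ of half-length $\ell_j$ with $\sum_j \ell_j \lesssim \alpha^{-1}\|f\|_1$, and then inflate each to $I_j^*$ of radius $\gamma'^{-1}(1/\ell_j)$. You then hope to prove $\sum_j \gamma'^{-1}(1/\ell_j)\lesssim \sum_j \ell_j$ by a geometric-series argument based on (a.4). But no such inequality holds: take $N$ pairwise well-separated intervals all of the same length $\ell$, so $\sum_j\ell_j=N\ell$ while $\sum_j\gamma'^{-1}(1/\ell_j)=N\,\gamma'^{-1}(1/\ell)$. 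By Remark~\ref{reciprocal-est} the ratio $\gamma'^{-1}(1/\ell)/\ell$ is unbounded as $\ell\to 0$ (for $\gamma(t)=-1/t$ it equals $\ell^{-1/2}$), and there is nothing to telescope since only one scale is present. In this situation the union $\bigcup I_j^*$ also has measure $\approx N\gamma'^{-1}(1/\ell)$, so the exceptional set you want to discard can be arbitrarily large compared to $\alpha^{-1}\|f\|_1$.

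The paper avoids this by building the enlargement into the Whitney decomposition itself (Lemma~\ref{altCZ}): one decomposes $\Omega=\{Mf>\alpha'\}$ into dyadic intervals $I_k$ chosen so that $\dist(I_k,\Omega^c)\approx \gamma'^{-1}(1/|I_k|)$, not $\approx |I_k|$. With this choice the inflated intervals $I_k^*$ of radius $3\gamma'^{-1}(1/|I_k|)$ automatically sit inside $\Omega$ and have bounded overlap (Remarks~\ref{finite-int}--\ref{unionIk}), so $\sum_k|I_k^*|\lesssim|\Omega|\lesssim \alpha'^{-1}\|f\|_1$ comes for free from the maximal theorem. The bad functions $b_k$ are then supported on the \emph{large} intervals $I_k^*$, and the scale $|I_k|$ enters only through a mollifier $\phi_k$ used to mediate between $b_k$ and the kernel. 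This is exactly Fefferman's device from \cite{Fefferman70}; it is not a cosmetic variant of the standard decomposition but the mechanism that makes the exceptional-set bound true.

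A smaller point: decomposing at height $\alpha$ gives $\|g\|_\infty\lesssim\alpha$, hence $|\{|T_{\e,\beta}g|>\alpha/2\}|\lesssim (1+|\beta|)^2\alpha^{-1}\|f\|_1$, which is one power of $(1+|\beta|)$ too many. The paper decomposes at height $\alpha'=\alpha/(1+|\beta|)$, which balances the $L^2$ operator norm against the Chebyshev denominator and recovers the linear dependence on $1+|\beta|$.
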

%%%%%%%%%%%%%%%%%%%%%%%%%%%%%%%%%%%%%%%%%%%%%%%%%%%%%% Theorem

As estimate \eqref{weaktype-est} is uniform in $\e$ and $\beta$, Theorem
\ref{weak-type11} follows by taking $\e\to 0$ and $\beta=0$.

The proof of Theorem \ref{weaktype-thm} will use the following extension of a
standard Whitney decomposition (cf. \cite{Stein}). 

%%%%%%%%%%%%%%%%%%%%%%%%%%%%%%%%%%%%%%%%%%%%%%%%%%%%%% Lemma
\begin{lemma}\label{altCZ}
Let $\Omega\subset\R$ be open and $F=\R\setminus\Omega$. Then there exists a
collection of intervals $\{I_k\}$ with disjoint interiors and two constants
$C\ge c >3$,  such that 
$\Omega = \bigcup I_k$ and
\[
c \gamma'^{-1}(1/|I_k|) \le \dist(I_k,F) \le C \gamma'^{-1}(1/|I_k|).
\]
\end{lemma}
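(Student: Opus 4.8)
\textbf{Proof plan for Lemma \ref{altCZ}.}

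The plan is to mimic the classical Whitney decomposition, replacing the Euclidean length scale $|I|$ by the geometrically distorted scale $\gamma'^{-1}(1/|I|)$ in the separation condition. First I would handle the trivial cases: if $\Omega=\emptyset$ there is nothing to prove, and if $\Omega=\R$ (so $F=\emptyset$, $\dist(I_k,F)=+\infty$) any tiling by intervals works. So assume $\emptyset\ne\Omega\subsetneq\R$. Introduce the auxiliary function $\rho(r)=\gamma'^{-1}(1/r)$, defined for $r$ small enough (by Remark \ref{inverse}, for $1/r\ge\gamma'(1)$, i.e. $r\le 1/\gamma'(1)$); note $\gamma'$ is positive and decreasing on $(0,1]$ by (a.1), so $\gamma'^{-1}$ is a positive decreasing function on $[\gamma'(1),\infty)$, hence $\rho$ is a positive \emph{increasing} continuous function of $r$ with $\rho(r)\to 0$ as $r\to 0^+$, and by Remark \ref{reciprocal-est}, $\rho(r)\ge r$ for $r$ small. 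This monotonicity is the structural fact that makes the dyadic argument go through: smaller intervals must sit proportionally closer to $F$.

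Next I would run the standard dyadic selection. Decompose $\R$ into the usual dyadic mesh $\mathcal{D}=\bigcup_{j\in\Z}\mathcal{D}_j$, where $\mathcal{D}_j$ consists of intervals of length $2^{-j}$ with dyadic endpoints. For each $x\in\Omega$ set $d(x)=\dist(x,F)\in(0,\infty)$ and select, among the dyadic intervals containing $x$, the largest one $Q$ with $c'\,\rho(|Q|)\le d(x)$ for a fixed large constant $c'$ to be chosen (so $|Q|$ is small enough that $\rho(|Q|)$ is defined; shrinking $\Omega$ near $\infty$ is not an issue since we only need small dyadic intervals, and if $\Omega$ is unbounded we just also throw in a coarse tiling of the part of $\Omega$ far from $F$ — or, more cleanly, we may assume $\diam\Omega<\infty$ after the reduction in the main proof, or treat the far region separately since there $\dist(I_k,F)$ and $|I_k|$ are both of unit order). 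Let $\mathcal{Q}$ be the collection of all selected intervals and, as in the classical argument, keep only the maximal ones to obtain a family $\{Q_k\}$ with pairwise disjoint interiors covering $\Omega$. The maximality gives, for each $k$: $c'\rho(|Q_k|)\le\dist(Q_k,F)$ directly (in fact $\le d(x)$ for $x\in Q_k$, and $\dist(Q_k,F)\le d(x)$), while the \emph{parent} $\widetilde Q_k$ of $Q_k$ was not selected, so $c'\rho(2|Q_k|)> \dist(\text{some point of }\widetilde Q_k,F)\ge \dist(Q_k,F)-|\widetilde Q_k|$; combined with $|\widetilde Q_k|=2|Q_k|\le 2\rho(|Q_k|)\le 2\rho(2|Q_k|)$, using $\rho(r)\ge r$ and monotonicity, this yields $\dist(Q_k,F)\le C'\rho(2|Q_k|)$ for a constant $C'$.

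The one genuinely non-classical point — and the step I expect to be the main obstacle — is converting $\rho(2|Q_k|)\le C''\rho(|Q_k|)$, i.e. showing $\rho$ is (locally, near $0$) doubling. This is exactly where assumption (a.4) enters: from $\gamma'(t)\ge A\gamma'((1+\e)t)$ with $A>1+\e>1$ one gets, writing $s=\gamma'(t)$ and $t=\gamma'^{-1}(s)$, that $\gamma'^{-1}(s/A)\le(1+\e)\gamma'^{-1}(s)$ for $s$ large; iterating this log-linearly lets one compare $\gamma'^{-1}(s/2)$ with $\gamma'^{-1}(s)$ up to a fixed multiplicative constant, which is precisely $\rho(2r)\le C''\rho(r)$. (Equivalently one may invoke assumption (a.3)-style reasoning; but (a.4) is the clean source of the doubling of $\gamma'^{-1}$.) With the doubling in hand, the two bounds above become $c'\rho(|Q_k|)\le\dist(Q_k,F)\le C'C''\rho(|Q_k|)$. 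Finally, choosing $c'$ large enough that $c:=c'>3$ and renaming $C:=\max(C'C'',\,c+1)$, and relabeling the $Q_k$ as the asserted intervals $I_k$ (merging or subdividing at unit scale the finitely many coarse pieces in the far region if $\Omega$ is unbounded, which does not affect the stated inequalities there since both sides are comparable to $1$), we obtain a family $\{I_k\}$ with disjoint interiors, $\Omega=\bigcup I_k$, and $c\,\gamma'^{-1}(1/|I_k|)\le\dist(I_k,F)\le C\,\gamma'^{-1}(1/|I_k|)$, as claimed. $\qed$
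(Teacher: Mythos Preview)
Your approach is essentially the paper's: a Whitney-type decomposition with the usual scale $|I|$ replaced by $\gamma'^{-1}(1/|I|)$, and the crucial doubling $\gamma'^{-1}(s/2)\le (1+\epsilon)^l\gamma'^{-1}(s)$ obtained by iterating assumption (a.4). The paper organizes the selection by layers rather than by a stopping time---it sets $\Omega_k=\{x\in\Omega:\ a\,\gamma'^{-1}(2^{k+1})\le\dist(x,F)\le a\,\gamma'^{-1}(2^k)\}$, takes the dyadic intervals of length $2^{-k}$ meeting $\Omega_k$, and then passes to maximal ones---but this is the same mechanism, and the doubling step is identical to yours.

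One genuine slip to fix: your parenthetical for the lower bound does not prove what you claim. From $c'\rho(|Q_k|)\le d(x_0)$ (for the point $x_0$ that selected $Q_k$) and $\dist(Q_k,F)\le d(x_0)$ you cannot deduce $c'\rho(|Q_k|)\le\dist(Q_k,F)$; both inequalities point the same way. The correct (and standard) argument, which is exactly what the paper does, is: for any $y\in Q_k$,
\[
d(y)\ \ge\ d(x_0)-|Q_k|\ \ge\ c'\rho(|Q_k|)-\rho(|Q_k|)\ =\ (c'-1)\,\rho(|Q_k|),
\]
using $|Q_k|\le\rho(|Q_k|)$ from Remark~\ref{reciprocal-est}; taking the infimum over $y\in Q_k$ gives $\dist(Q_k,F)\ge(c'-1)\rho(|Q_k|)$, and then one chooses $c'$ large enough that $c:=c'-1>3$.
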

%%%%%%%%%%%%%%%%%%%%%%%%%%%%%%%%%%%%%%%%%%%%%%%%%%%%%% Lemma

Note that the distance of each interval to the complement of $\Omega$
is estimated in terms of $\gamma'^{-1}$, rather than just to its length.

%%%%%%%%%%%%%%%%%%%%%%%%%%%%%%%%%%%%%%%%%%%%%%%%%%%%%% Proof
\begin{proof}
Let $\mathscr M_k$ be the mesh of dyadic intervals of length $2^{-k}$ in 
$\R$. For a number $a>0$ that will be fixed later, let
\[
\Omega_k = \{x\in\Omega: a \gamma'^{-1}(2^{k+1}) \le \dist(x,F)
\le a \gamma'^{-1}(2^k) \}.
\]
If $I\in\mathscr M_k$ is such that $I\cap \Omega_k\not=\emptyset$, then
\[
\dist(I,F) \le a \gamma'^{-1}(2^k) = a \gamma'^{-1}(1/|I|).
\]

Now, from assumption (a.4), $\gamma'(t) \ge A \gamma'((1+\epsilon)t)$. If 
$l\ge1$ is such that $4>A^l\ge 2$, then $\gamma'(t) \ge 2 
\gamma'((1+\epsilon)^lt)$, and thus
\[
\gamma'^{-1}(2^k) \le (1+\epsilon)^l \gamma'^{-1}(2^{k+1}).
\]
Hence
\[
\begin{split}
\dist(I,F) &\ge a \gamma'^{-1}(2^{k+1})  - |I|
\ge \frac{1}{(1+\epsilon)^l} \gamma'^{-1}(2^k) - |I|\\
&> \Big( \frac{1}{(1+\epsilon)^l} - 1 \Big) \gamma'^{-1}(2^k),
\end{split}
\]
since $|I| < \gamma'^{-1}(1/|I|)$ and $|I| = 2^{-k}$. Therefore, if we set
$a = 5(1+\epsilon)^l$, then we have
\[
4 \gamma'^{-1}(1/|I|) \le \dist(I,F) \le 20 \gamma'^{-1}(1/|I|),
\]
because $a < 5 A^l < 20$.

As $\bigcup \Omega_k = \Omega$, the lemma follows by taking $\{I_k\}$ as the
collection of maximal intervals as above.
\end{proof}
%%%%%%%%%%%%%%%%%%%%%%%%%%%%%%%%%%%%%%%%%%%%%%%%%%%%%% Proof

\begin{remark}\label{finite-int}
If for each $I_k$ as above we define the interval
\begin{equation}\label{intervals}
I_k^{*} = [y_k  - 3\gamma'^{-1}(1/|I_k|), y_k  + 3\gamma'^{-1}(1/|I_k|)],
\end{equation}
where $y_k$ is the center of $I_k$, then there exists a fixed $N$ such that at
most $N$ intervals $I_j^{*}$ intersect $I_k^{*}$. Indeed, if $x\in I_k^{*}$,
then
\[
\gamma'^{-1}(1/|I_k|) \le \dist(x,F) \le 23 \gamma'^{-1}(1/|I_k|).
\]
Note that $|I_k^{*}| = 6\gamma'^{-1}(1/|I_k|) \ge 6/23 \dist(x,F)$, and that
$I_k^{*}$ is contained in an interval of length
$12\gamma'^{-1}(1/|I_k|) \le 12 \dist(x,F)$ with center $x$. Hence, there can
be at most 
\[
\frac{12 \dist(x,F)}{6/23 \dist(x,F)} = 46
\]
such intervals, so we can take $N=46$.
\end{remark}

\begin{remark}\label{unionIk}
From the discussion in Remark \ref{finite-int}, we see that
$I_k^* \subset \Omega$ and, if $50I_k^*$ is the interval with the same center
as $I_k^*$ with 50 times its length, then $50I_k^*\setminus\Omega \not=
\emptyset$.

Hence $\bigcup I_k^* = \Omega$ and, since each $I_k^*$ intersects at most a
finite fixed number of other such intervals,
\begin{equation}\label{sum-lengths}
\sum_k |I_k^*| \lesssim |\Omega|.
\end{equation}
\end{remark}

%%%%%%%%%%%%%%%%%%%%%%%%%%%%%%%%%%%%%%%%%%%%%%%%%%%%%% Proof of Theorem
\begin{proof}[Proof of Theorem \ref{weaktype-thm}]
We follow the idea of the proof of Theorem 2' of \cite{Fefferman70}, but we now
apply Lemma \ref{altCZ} with
\[
\Omega = \{ x\in\R: Mf(x) > \alpha' \},
\]
where $\alpha' = \alpha/(1 + |\beta|)$ and $Mf$ is the Hardy-Littlewood maximal
function of $f$. Let $f = g + b = g + \sum b_k$, where each
\[
b_k(x) = \Big(f(x) - \frac{1}{|I_k^*|}\int_{I_k^*}f\Big)\chi_{I_k^*}(x)
\]
and $I_k^*$ is as in Remark \ref{finite-int}.
Note that $\supp b_k \subset I_k^*$, $\int b_k = 0$, and we have the estimate 
\begin{equation*}
\int |b_k| \le 2\int_{I_k^*}|f| \le 100|I_k^*|\cdot\frac{1}{|50I_k^*|}
\int_{50I_k^*}|f|  \lesssim \alpha' |I_k^{*}|
\end{equation*}
because $50I_k^*\setminus\Omega \not= \emptyset$, by Remark \ref{unionIk}.
This implies, by \eqref{sum-lengths},
\begin{equation*}
\int |b| \le \sum_k \int |b_k|\lesssim \alpha' \sum_k |I_k^*| \lesssim
\alpha' |\Omega| \lesssim \int |f|,
\end{equation*}
by the Hardy-Littlewood maximal theorem.

As usual, $|g(x)| \le\alpha'$ if $x\not\in\Omega$ (by the Lebesgue 
differentiation theorem) and, if $x\in I_k^*$,
\begin{equation*}
|g(x)| \le \frac{1}{|I_k^*|}\int_{I_k^*}|f| \lesssim \alpha',
\end{equation*}
By Lemma \ref{K-est}, $T_{\e,\beta}$ is bounded in $L^2$ with norm
$(1+|\beta|)$, hence
\begin{equation}\label{T2}
\begin{split}
|\{ x\in\R: |T_{\e,\beta}g(x)|>\alpha \}| &\le \frac{1}{\alpha^2}
\int |T_{\e,\beta}g|^2\\
&\lesssim \frac{(1 + |\beta|)^2}{\alpha^2}\int |g|^2 \\
&\lesssim \frac{(1 + |\beta|)^2}{\alpha^2}\alpha'  \int |g|
\lesssim \frac{1+|\beta|}{\alpha}||f||_{L^1}.
\end{split}
\end{equation}

Now, let $\phi\in C^\infty(\R)$ be nonnegative, supported in $\{x:|x|<1\}$ and 
with $\int \phi = 1$. For each $k$, define
\[
\phi_k(x) = \frac{1}{|I_k|} \phi\Big(\frac{x}{|I_k|}\Big),
\]
\ie, $\phi_k$ is the dilation of $\phi$ with scale $|I_k|$. Set
$\tilde b_k = \phi_k * b_k$ and
\[
\tilde b = \sum_{|I_k^*| \le 1} \tilde b_k.
\]
Note that, if $|I_k^*| > 1$ and $x\not\in 3I_k^{*}$, then
\[
K_{\e,\beta}*b_k(x) = \int_{I_k^*} K_{\e,\beta}(x-y)b_k(y) dy = 0,
\]
because $|x-y|\ge |I_k^*| > 1$ and $\supp K_{\e,\beta}\subset (0,1]$.
So, for $x\not\in \bigcup 3I_k^*$, $K_{\e,\beta}*b = \sum_{|I_k^*|\le 1} b_k.$
Thus
\[
K_{\e,\beta}*b(x) - K_{\e,\beta}*\tilde b(x) = \sum_{|I_k^*| \le 1} 
\big( K_{\e,\beta}*b_k(x) - K_{\e,\beta}*\tilde b_k(x) \big)
\]
for $x\not\in 3I_k^{*}$, and if $|I_k^*|\le 1$,
\begin{equation*}
\begin{split}
\int_{\R\setminus 3I_k^{*}}&\big| K_{\e,\beta}*b_k(x) - K_{\e,\beta}*\tilde b_k(x) \big|dx\\
&\le \int_{\R\setminus 3I_k^{*}} \Big| 
\int_{I_k^*} (K_{\e,\beta}(x-y) - K_{\e,\beta}*\phi_k(x-y))b_k(y) dy \Big| dx\\
&\le \int_{I_k^*}\int_{\R\setminus 3I_k^{*}}
\big|K_{\e,\beta}(x-y) - K_{\e,\beta}*\phi_k(x-y)\big| dx \; |b_k(y)|dy \\
&\le \int_{I_k^*}\int_{|z|>2\gamma'^{-1}(1/|I_k|)}
\big|K_{\e,\beta}(z) - K_{\e,\beta}*\phi_k(z)\big| dz \; |b_k(y)|dy,
\end{split}
\end{equation*}
because, if $y\in I_k^*$ and $x\not\in 3I_k^{*}$, then 
\[
|x-y| \ge |I_k^*| > 2\gamma'^{-1}(1/|I_k|).
\]
Also
\begin{multline*}
\int_{|z|>2\gamma'^{-1}(1/|I_k|)}\big|K_{\e,\beta}(z) - K_{\e,\beta}*\phi_k(z)\big| dz\\
= \int_{|z|>2\gamma'^{-1}(1/|I_k|)}\Big|K_{\e,\beta}(z) - \int_{|w| < |I_k|}
K_{\e,\beta}(z - w)\phi_k(w) dw \Big| dz \\
\le \int_{|w| < |I_k|}
\int_{|z|>2\gamma'^{-1}(1/|I_k|)} |K_{\e,\beta}(z) - K_{\e,\beta}(z-w)| dz \phi_k(w) dw \\
\lesssim 1 + |\beta|,
\end{multline*}
because $|w| < |I_k|$ implies $2\gamma'^{-1}(1/|I_k|) > 2\gamma'^{-1}(1/|w|)$,
so we obtain the estimate using \eqref{K-singint}.

Thus
\begin{multline*}
\int_{\R\setminus\bigcup 3I_k^*} 
|K_{\e,\beta}*b(x) - K_{\e,\beta}*\tilde b(x)| dx\\
\lesssim (1 + |\beta|) \sum_{|I_k^*|\le 1} \int_{I_k^*} |b_k(y)| dy \lesssim 
(1 + |\beta|) ||f||_{L^1},
\end{multline*}
and therefore
\[
|\{x\not\in\bigcup 3I_k^*: 
|K_{\e,\beta}*b(x) - K_{\e,\beta}*\tilde b(x)| > \alpha \}| \lesssim
\frac{(1 + |\beta|)}{\alpha} ||f||_{L^1}.
\]
Since 
\begin{equation*}
\Big|\bigcup 3I_k^*\Big| \le 3 \sum_k |I_k^*| \lesssim 
\frac{1 + |\beta|}{\alpha} ||f||_{L^1},
\end{equation*}
it remains to prove the estimate
\begin{equation*}
\Big| \Big\{ x : | K_{\e,\beta}*\tilde 
b(x) \Big| >\alpha \Big\}\Big| \lesssim \frac{1 + |\beta|}{\alpha} 
||f||_{L^1}.
\end{equation*}

Write
\begin{multline*}
K_{\e,\beta}*\tilde b(x) = \\
\sum_{|I_k^*|\le 1}(1 - \chi_{3I_k^*}(x))
 K_{\e,\beta}*\tilde b_k(x)
+ \sum_{|I_k^*| \le 1} \chi_{3I_k^*}(x)K_{\e,\beta}*\tilde b_k(x).
\end{multline*}
It is clear that the second sum is supported in $\bigcup 3I_k^*$, so it is
enough to prove the estimate
\[
\Big| \Big\{ x : \Big| \sum_{\substack{|I_k^*|\le 1\\x\notin 3I_k^{*}}} 
K_{\e,\beta}*\tilde 
b_k(x) \Big| >\alpha \Big\}\Big| \lesssim \frac{1 + |\beta|}{\alpha} 
||f||_{L^1},
\]
which follows from
%%%%%%%%%%%%%%%%% Claim 1
\begin{equation}\label{L1-est}
\Big|\Big| \sum_{\substack{|I_k^*|\le 1\\x\notin 3I_k^{*}}} 
K_{\e,\beta}*\tilde b_k(x)
\Big|\Big|_{L^1} \lesssim (1 + |\beta|)||f||_{L^1}.
\end{equation}

To prove \eqref{L1-est}, we estimate each integral
\[
\int_{x\notin 3I_k^{*}} |K_{\e,\beta}*\tilde b_k(x)| dx.
\]
Using the fact that $\int b_k = 0$ and $\tilde b_k = \phi_k*b_k$, we see that
\begin{multline*}
K_{\e,\beta}*\tilde b_k(x) = K_{\e,\beta}*\phi_k*b_k(x) = \int_{I_k^*} 
K_{\e,\beta}*\phi_k(x-y) b_k(y) dy\\
= \int_{I_k^*} 
(K_{\e,\beta}*\phi_k(x-y) - K_{\e,\beta}*\phi_k(x-y_k)) b_k(y) dy.
\end{multline*}
Thus
\begin{multline} \label{Kbtilde}
\int_{x\notin 3I_k^{*}} |K_{\e,\beta}*\tilde b_k(x)| dx \le\\
\int_{I_k^*} \int_{x\notin 3I_k^{*}}
|K_{\e,\beta}*\phi_k(x-y) - K_{\e,\beta}*\phi_k(x-y_k)| dx
|b_k(y)| dy.
\end{multline}
Now
\begin{multline*}
K_{\e,\beta}*\phi_k(x-y) - K_{\e,\beta}*\phi_k(x-y_k) \\
= \int_{|z| < |I_k|}
\big( K_{\e,\beta}(x - y - z) - K_{\e,\beta}(x - y_k - z) \big) \phi_k(z) dz, 
\end{multline*}
so the inner integral in \eqref{Kbtilde} is estimated by
\begin{equation}\label{Kbtilde-inn}
\int_{|z| < |I_k|}\int_{x\notin 3I_k^{*}}
\big| K_{\e,\beta}(x - y - z) - K_{\e,\beta}(x - y_k - z) \big| dx \phi_k(z) dz.
\end{equation}
Now, if $x\notin 3I_k^{*}$ and $|z| < |I_k|$, 
\[
|x - y_k - z| \ge |x - y_k| - |z| > |I_k^*| - |I_k|
> 2\gamma'^{-1}\Big(\frac{1}{|I_k|}\Big) \ge
2\gamma'^{-1}\Big(\frac{1}{|y - y_k|}\Big),
\]
and thus \eqref{Kbtilde-inn} is estimated by 
\begin{multline*}
\int_{|z| < |I_k|}
\int_{|w| > 2\gamma'^{-1}(\frac{1}{|y - y_k|})}
\big| K_{\e,\beta}(w) - K_{\e,\beta}(w - (y_k - y)) \big|dw \phi_k(z) dz\\
\lesssim (1 + |\beta|)\int_{|z| < |I_k|} \phi_k(z) dz = 1 + |\beta|.
\end{multline*}
Therefore
\[
\int_{x\notin 3I_k^{*}} |K_{\e,\beta}*\tilde b_k(x)| dx \lesssim
(1 + |\beta|)\int|b_k| \lesssim (1 + |\beta|)\alpha' |I_k^*|,
\]
so
\begin{equation*}
\Big|\Big| \sum_{\substack{|I_k^*| \le 1\\
x\notin 3I_k^{*}}} K_{\e,\beta}*\tilde b_k(x)
\Big|\Big|_{L^1} \lesssim (1 + |\beta|)||f||_{L^1}.
\end{equation*}
We have proved \eqref{L1-est}, and thus completed the proof of Theorem 
\ref{weaktype-thm}.
\end{proof}

%%%%%%%%%%%%%%%%%%%%%%%%%%%%%%%%%%%%%%%%%%%%%%%%%%%%%% Proof of Theorem

\section*{Aknowledgements}
This research was supported by CONACYT Grant FORDECYT 265667. The authors
would like to thank the referee for suggestions that lead to the improvement of
this paper.
%%%%%%%%%%%%%%%%%%%%%%%%%%%%%%%%%%%%%%%%%%%%%%%%%%%%%%
%%%%%%%%%%%           References         %%%%%%%%%%%%%
%%%%%%%%%%%%%%%%%%%%%%%%%%%%%%%%%%%%%%%%%%%%%%%%%%%%%%

\end{document}